\newtheorem{definition}{Definition}
\newtheorem{proposition}{Proposition}
\newtheorem{lemma}{Lemma}
\newtheorem{theorem}{Theorem}
\newtheorem{assumption}{Assumption}
\newcommand{\calX}{{\mathcal{X}}}
\newcommand{\calI}{{\mathcal{I}}}
\newcommand{\calB}{{\mathcal{B}}}
\newcommand{\calM}{{\mathcal{M}}}
\newcommand{\bbR}{{\mathbb{R}}}
\newcommand{\bbE}{{\mathbb{E}}}
\newcommand{\bbP}{{\mathbb{P}}}
\newcommand{\bx}{{\mathbf{x}}}
\newcommand{\by}{{\mathbf{y}}}
\newcommand{\bz}{{\mathbf{z}}}
\newcommand{\be}{{\mathbf{e}}}
\newcommand{\bG}{{\mathbf{G}}}
\newcommand{\bH}{{\mathbf{H}}}
\newcommand{\bJ}{{\mathbf{J}}}
\newcommand{\bw}{{\mathbf{w}}}
\newcommand{\bbN}{{\mathbb{N}}}
\newcommand{\me}{{\mathrm{e}}}
\newcommand{\utwi}[1]{\mbox{\boldmath $#1$}}
\newcommand{\bchi}{{\utwi{\chi}}}
\newcommand{\bvarepsilon}{{\utwi{\varepsilon}}}
\newcommand{\bxi}{{\utwi{\xi}}}
\title{Feedback-Based Optimization with Sub-Weibull Gradient Errors and Intermittent Updates}
\author{Ana M. Ospina, Nicola Bastianello, and Emiliano Dall'Anese
\thanks{A. Ospina and E. Dall'Anese are with Department of Electrical, Computer and Energy Engineering, University of Colorado Boulder, Boulder, CO, USA, \{ana.ospina, emiliano.dallanese\}@colorado.edu; N. Bastianello is with the Department of Information Engineering, University of Padova, Padova, Italy, {bastian4@dei.unipd.it}. This work was supported in part by the NSF CAREER award 1941896. }
}
\begin{document}

\allowdisplaybreaks[3]

\maketitle
\thispagestyle{empty}
\pagestyle{empty}

\begin{abstract}

This paper considers a feedback-based projected gradient method for optimizing systems modeled as algebraic maps. The focus is on a setup where the gradient is corrupted by random errors that follow a sub-Weibull distribution, and where the measurements of the output -- which replace the input-output map of the system in the algorithmic updates -- may not be available at each iteration.  The sub-Weibull error model is particularly well-suited in frameworks where the cost of the problem is learned via Gaussian Process (GP) regression (from functional evaluations) concurrently with the execution of the algorithm; however, it also naturally models setups where nonparametric methods and neural networks are utilized to estimate the cost.  Using the sub-Weibull model, and with Bernoulli random variables modeling missing measurements of the system output,  we show that the online algorithm generates points that are within a bounded error from the optimal solutions. In particular, we provide error bounds in expectation and in high probability. Numerical results are presented in the context of a demand response problem in smart power grids.

\end{abstract}

\begin{IEEEkeywords}
Optimization, Optimization algorithms. 
\end{IEEEkeywords}

\section{Introduction}
\label{sec:introduction} 

\IEEEPARstart{W}{e} consider optimization problems associated with systems  that feature  $M$ controllable inputs $\bx \in \mathbb{R}^M$ and unknown exogenous inputs $\bw \in \bbR^w$. Modeling the system as an algebraic map $\by= \calM(\bx, \bw)$, where $\calM: \bbR^M \times \bbR^w \to \bbR^y$ is well-defined~\cite{hauswirth2021optimization,bolognani2013distributed,Dallanese2019_PD}, the objective is to steer the system to optimal solutions of the following time-varying problem\footnote{\textit{\textbf{Notation}:} Upper-case (lower-case) boldface letters will be used for matrices (column vectors); $(\cdot)^\top$ denotes transposition. For a given vector $\bx \in \bbR^n$, $\| \bx \| := \sqrt{\bx^\top\bx}$. 
For a random variable $X \in \mathbb{R}$, $\bbE[X]$ denotes the expected value of $X$, and $\mathbb{P}[X \leq \epsilon]$ denotes the probability of $X$ taking values smaller than or equal to $\epsilon$. For a continuously-differentiable function $f: \bbR^n \to \bbR$, the gradient is denoted by $\nabla f: \bbR^n \to \bbR^n$. Given a convex set $\calX \subseteq \mathbb{R}^n$, $\mathrm{proj}_{\calX}(\bz)$ denotes the projection of $\bz \in \mathbb{R}^n$ onto $\calX$. $\mathcal{I}$ is the identity mapping. Finally, $e$ will denote the Euler's number.
}
\begin{equation} \label{eq:problem1}
    \bx_{*,t} \in \underset{\bx \in \calX_t}{\min} \; f_t(\bx) := C_t (\calM(\bx, \bw_t)) + U_t(\bx) ,
\end{equation}
where $t \in \mathbb{N} $ is the time index, $\calX_t \subseteq \bbR^M$ is a time-varying constraint set for the inputs, $\bx \mapsto U_t(\bx)$ is a cost associated with the inputs, and $\by \mapsto C_t(\by)$ is a cost associated with the outputs. We consider the case where $\bw_t$ is unknown or it cannot be directly measured; with $\bw_t$ \emph{not} known, \eqref{eq:problem1} can be solved using online algorithms of the following form (see, e.g.,\cite{bolognani2013distributed,Dallanese2019_PD}):
\begin{align} 
 \label{eq:update_ideal}
   \hspace{-.2cm} \bx_{t} = \mathrm{proj}_{\calX_t} \left[\bx_{t-1} - \alpha ( \bJ_t^\top \nabla C_t (\by_{t-1}) + \nabla U_t(\bx_{t-1})) \right],
\end{align}
where $\bJ_t := [\frac{\partial \mathcal{M}}{\partial x_m}(\bx_{t-1})]$ is the Jacobian  of  $\mathcal{M}$ ($x_m$ here  denotes the $m$th entry of $\bx$). The algorithm is ``feedback-based'' since  the output measurement $\by_{t-1}$ replaces the system model $\calM(\bx_{t-1}, \bw_{t-1})$ in the computation of the gradient. 

We consider an online algorithm similar to~\eqref{eq:update_ideal}, but in a setting where: (i) the gradient  is corrupted by random errors that follow a sub-Weibull distribution~\cite{Vladimirova_2020}; and, (ii)~measurements $\{\by_t\}_{t \in \mathbb{N} \cup \{0\}}$ are noisy and may not be available at each time $t$. The latter models processing and communication bottlenecks in the sensing layers of the system (for example, in power grid metering systems and transportation systems). On the other hand, the sub-Weibull model allows us to consider concurrent learning and optimization frameworks where the costs $\bx \mapsto U_t(\bx)$ and $\by \mapsto C_t(\by)$ are learned via Gaussian Process (GP) regression~\cite{GPML_Rasmussen06}, parametric methods~\cite{notarnicola2020distri}, non-parametric methods~\cite{hastie2009elements}, and neural networks~\cite{Marchi2022} from both a set of recorded data and (an infrequent set of) functional evaluations acquired during the execution of the algorithm. In this paper, we particularly highlight the use of  GPs~\cite{GP_userfeedback,fabiani2021learning}. Learning the cost concurrently with the execution of the algorithm finds ample applications in cyber-physical systems with human-in-the-loop~\cite{munir2013cyber}, where $U_t(\bx)$ models users' preferences, as well as data-enabled and perception-based optimization~\cite{lindemann2021learning} where $C_t(\by)$ is learned from data.
The ability of our framework to model various learning settings is grounded on the fact that the sub-Weibull distribution includes sub-Gaussian and sub-exponential errors as sub-cases, as well as random errors whose distribution has a finite support~\cite{Vladimirova_2020,Vershynin2018}.  

\emph{\textbf{Prior works}}. Prior works in the context of feedback-based optimization considered  time-invariant costs~\cite{bolognani2013distributed,chang2019saddle,hirata2014real} and time-varying costs~\cite{Dallanese2019_PD, colombino2019towards,tang2017real}; the convergence of algorithms were investigated when the cost is known, measurements are noiseless, and measurements are received at each iteration (see also the survey~\cite{hauswirth2021optimization} for a comprehensive list of references). 

Online optimization methods with concurrent learning of the cost with GPs were considered in \cite{GP_userfeedback} for functions satisfying the Polyak-\L ojasiewicz inequality; the algorithm employed the upper confidence bound and the regret was investigated. Quadratic functions were considered in \cite{notarnicola2020distri}, and they were estimated via recursive least squares. However, in~\cite{GP_userfeedback,notarnicola2020distri}, algorithms were not implemented with a system in the loop and no missing measurements were considered.  
Online algorithms with concurrent learning via shape-constrained GP were considered in~\cite{ospina2021time}; however, only bounds in expectation for the regret were provided. High probability convergence results were provided in~\cite{fabiani2021learning} for non-monotone games, where the pseudo-gradient is learned  from data. 

Although it is not the main focus of this paper, we also acknowledge works on zeroth-order methods (see, e.g.,~\cite{liu2018zeroth,tang2020distributed}) where gradient errors emerge from single- or multi-point gradient estimation; our framework based on a sub-Weibull model can  be applied to derive convergence bounds  when the gradient in~\eqref{eq:update_ideal} is estimated via single- or multi-point estimation. Finally, we mention that several convergence results have been derived for  classical online algorithms~\cite{selvaratnam2018Numerical,mokhtari2016online}, including asynchronous implementations~\cite{behrendt2021technical}; our results provide extensions to cases with sub-Weibull gradient errors. 

\emph{\textbf{Contributions}}. The main contributions of this paper are as follows.  \textit{C1)} We consider an online feedback-based projected gradient descent method with intermittent updates and with noisy gradients. We provide new  bounds for the error $\|\bx_t - \bx_{*,t}\|$ in \emph{expectation} that hold iteration-wise, where $\{\bx_t\}_{t \in \mathbb{N} }$ is the sequence generated by the algorithm; to this end, missing measurements are modeled as Bernoulli random variables. \textit{C2)}~We provide new bounds on $\|\bx_t - \bx_{*,t}\|$ in \emph{high probability}; the bounds are derived by adopting a sub-Weibull distribution for the error affecting the gradient.  \textit{C3)} We show how our framework models concurrent learning approaches, where one  leverages functional evaluations to learn the unknown cost via GPs during the execution of the algorithm. \textit{C4)} We test the performance of our algorithm in cases where the cost is estimated via GPs and feedforward neural networks.

The remainder of this paper is organized as follows. Section \ref{sec:pre} introduces preliminary definitions. Section \ref{sec:algo} presents the proposed algorithm and provides the corresponding analysis. Section \ref{sec:pre_GP} presents the concurrent learning approach. Section \ref{sec:sim} presents numerical results on a demand response problem, while Section \ref{sec:con} concludes the paper.

\section{Preliminaries} \label{sec:pre}

In this section, we introduce the class of sub-Weibull random variables and provide relevant properties. For a random variable  (rv) $X$, when the $k$-th moment of $X$ exists for some $k \geq 1$, we define $\|X \|_k := (\bbE [ |X |^k ] )^{1/k}$. 

\vspace{.1cm}

\begin{definition}[Sub-Weibull rv \cite{Vladimirova_2020}] \label{sec:def_subW} 
A random variable $X \in \mathbb{R}$ is sub-Weibull if  $\exists \, \theta > 0$ such that (s.t.) one of the following conditions is satisfied: 
\begin{enumerate}
	\item[(i)] $\exists \,\, \nu_1 > 0$ s.t. $\bbP[|X| \geq \epsilon] \leq 2 e^{- \left( \epsilon / \nu_1 \right)^{1 / \theta} }$, $\forall \, \epsilon~>~0$.
	\item[(ii)] $\exists \,\, \nu_2 > 0$ s.t. $\|X\|_k \leq \nu_2 k^\theta$, $\forall \, k \geq 1$. \hfill $\Box$
\end{enumerate}
\end{definition}
The parameters $\nu_1, \nu_2$ differ by a constant that depends on $\theta$; in particular, if property (ii) holds with parameter $\nu_2$, then property (i) holds with $\nu_1 = \left( 2 e / \theta \right)^\theta \nu_2$. Hereafter, we use the short-hand notation $X \sim \mathrm{subW}(\theta, \nu)$ to indicate that $X$ is a sub-Weibull rv according to Definition~\ref{sec:def_subW}(ii) (i.e.,  $\|X\|_k \leq \nu k^\theta$, $\forall \, k \geq 1$). We note that the sub-Weibull class includes sub-Gaussian and sub-exponential rvs as sub-cases; in particular, if $\theta = 1/2$ and $\theta = 1$ we have sub-Gaussian and sub-exponential rvs, respectively. Furthermore, if a rv has a distribution with finite support, it belongs to the sub-Gaussian class (by Hoeffding’s inequality \cite[Theorem 2.2.6]{Vershynin2018}) and, thus, to the sub-Weibull class.

\vspace{.1cm}

\begin{proposition} (\textit{Inclusion}~\cite{Vladimirova_2020}) 
Let $X \sim \mathrm{subW}(\theta, \nu)$ and let $\theta', \nu'$ s.t. $\theta' \geq \theta$, $\nu' \geq \nu$. Then, $X \sim \mathrm{subW}(\theta', \nu')$.  \hfill $\Box$ \label{sec:inclusion}
\end{proposition}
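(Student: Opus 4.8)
The plan is to work directly from Definition~\ref{sec:def_subW}(ii), since the short-hand $X \sim \mathrm{subW}(\theta,\nu)$ refers precisely to the moment characterization $\|X\|_k \leq \nu k^\theta$ for all $k \geq 1$. By hypothesis this bound holds, and I want to upgrade it to $\|X\|_k \leq \nu' k^{\theta'}$ for all $k \geq 1$ using only the assumptions $\theta' \geq \theta$ and $\nu' \geq \nu$. This reduces the whole claim to comparing the two envelopes $k \mapsto \nu k^\theta$ and $k \mapsto \nu' k^{\theta'}$ on the relevant range of $k$.

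First I would fix an arbitrary $k \geq 1$ and bound the power $k^\theta$ by $k^{\theta'}$. The key observation is that for any base $k \geq 1$ the map $s \mapsto k^s = e^{s \ln k}$ is nondecreasing in the exponent $s$, because $\ln k \geq 0$ when $k \geq 1$. Hence $\theta' \geq \theta$ immediately yields $k^\theta \leq k^{\theta'}$.

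Then I would chain the inequalities. Starting from the hypothesis $\|X\|_k \leq \nu k^\theta$, I combine $\nu \leq \nu'$ with $k^\theta \leq k^{\theta'}$ to obtain $\|X\|_k \leq \nu k^\theta \leq \nu' k^\theta \leq \nu' k^{\theta'}$. Since $k \geq 1$ was arbitrary, the bound $\|X\|_k \leq \nu' k^{\theta'}$ holds for every $k \geq 1$, which is exactly the statement that $X \sim \mathrm{subW}(\theta',\nu')$.

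There is essentially no hard step here: the result is a monotonicity statement about the moment envelope. The only point requiring care is that the inequality $k^\theta \leq k^{\theta'}$ relies on $k \geq 1$ — it would reverse for $0 < k < 1$ — but this causes no difficulty, since the sub-Weibull moment condition only quantifies over $k \geq 1$, so the needed restriction on $k$ is built into the definition.
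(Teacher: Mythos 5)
Your proof is correct and complete: the paper itself omits the proof (citing \cite{Vladimirova_2020}), and your argument is exactly the standard one, chaining $\|X\|_k \leq \nu k^\theta \leq \nu' k^{\theta'}$ using $\nu \leq \nu'$ and the monotonicity of $s \mapsto k^s$ for $k \geq 1$. Your remark that the restriction $k \geq 1$ is essential (the exponent inequality would reverse for $0 < k < 1$) is the right point of care, and it is indeed covered by Definition~\ref{sec:def_subW}(ii).
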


\vspace{.1cm}

\begin{proposition} (\textit{Closure of sub-Weibull class~\cite{bastianello2021stochastic}}) Let $X_i \sim \mathrm{subW}(\theta_i, \nu_i)$, $i = 1,2$, based on Definition~\ref{sec:def_subW}(ii). 
\begin{enumerate}	
	\item[(a)] \emph{Product by scalar:} Let $a \in \bbR$, then $a X_i \sim \mathrm{subW}(\theta_i, |a| \nu_i)$.
	\item[(b)] \emph{Sum by scalar:} Let $a \in \bbR$, then $a + X_i \sim \mathrm{subW}(\theta_i, |a| + \nu_i)$.
	\item[(c)] \emph{Sum:} Let $\{X_i, i = 1,2\}$ be possibly dependent; then, $X_1 + X_2 \sim \mathrm{subW}(\max\{ \theta_1, \theta_2 \}, \nu_1 + \nu_2)$.
	\item[(d)] \emph{Product:} Let $\{X_i, i = 1, 2\}$ be independent; then, $X_1 X_2 \sim \mathrm{subW}(\theta_1 + \theta_2, \nu_1 \nu_2)$. \hfill $\Box$
\end{enumerate} \label{sec:closure}
\end{proposition}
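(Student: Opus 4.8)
The plan is to work entirely from the moment characterization of Definition~\ref{sec:def_subW}(ii), since all four parametrizations in the statement are phrased in those terms. The single observation driving every part is that $\|X\|_k = (\bbE[|X|^k])^{1/k}$ is exactly the $L^k$ norm of $X$ over the underlying probability space; hence it is absolutely homogeneous and obeys Minkowski's inequality. In addition, I will repeatedly use the elementary fact that for $k \geq 1$ and $\theta > 0$ one has $k^\theta \geq 1$, and that $\theta \mapsto k^\theta$ is nondecreasing. With these tools the four claims reduce to short manipulations of $\|\cdot\|_k$.

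For part (a), homogeneity of the $L^k$ norm gives $\|a X_i\|_k = |a|\,\|X_i\|_k \leq |a|\,\nu_i k^{\theta_i}$ for every $k \geq 1$, which is precisely $a X_i \sim \mathrm{subW}(\theta_i, |a|\nu_i)$. For part (b), Minkowski's inequality applied to the constant $a$ and $X_i$ yields $\|a + X_i\|_k \leq |a| + \|X_i\|_k \leq |a| + \nu_i k^{\theta_i}$; since $k^{\theta_i} \geq 1$ we may write $|a| \leq |a|\,k^{\theta_i}$, so the bound becomes $(|a| + \nu_i)\,k^{\theta_i}$, giving the stated parameters.

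Part (c) is the one place where the allowance for dependent $X_1, X_2$ matters, and it is exactly why Minkowski (rather than any independence-based identity) is the right instrument: the triangle inequality for $\|\cdot\|_k$ holds with no independence assumption, so $\|X_1 + X_2\|_k \leq \|X_1\|_k + \|X_2\|_k \leq \nu_1 k^{\theta_1} + \nu_2 k^{\theta_2}$. Bounding each $k^{\theta_i}$ by $k^{\max\{\theta_1,\theta_2\}}$ (valid for $k \geq 1$) collapses this to $(\nu_1 + \nu_2)\,k^{\max\{\theta_1,\theta_2\}}$, which is the claim. For part (d), independence lets me factorize the joint moment: $\bbE[|X_1 X_2|^k] = \bbE[|X_1|^k]\,\bbE[|X_2|^k]$, whence $\|X_1 X_2\|_k = \|X_1\|_k\,\|X_2\|_k \leq \nu_1 \nu_2\, k^{\theta_1 + \theta_2}$, matching the stated $\mathrm{subW}(\theta_1 + \theta_2, \nu_1 \nu_2)$.

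I do not anticipate a substantive obstacle here: every step is an elementary $L^k$-norm estimate. The only point requiring care is keeping track of which hypotheses are actually used — Minkowski (no independence) for the sum in (c), and the product rule for moments (independence essential) in (d) — and verifying the $k^\theta \geq 1$ step that upgrades the additive constant in (b) into a multiplicative factor of the same $k^{\theta_i}$.
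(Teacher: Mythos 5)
Your proof is correct, and it follows the standard route: the paper itself states this proposition without proof, citing \cite{bastianello2021stochastic}, and the argument given there is essentially the same sequence of $L^k$-norm estimates you use (homogeneity for (a), Minkowski plus $k^{\theta}\geq 1$ for (b) and (c), and moment factorization under independence for (d)). All four parameter pairs check out exactly as stated.
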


\vspace{.1cm}

\begin{proposition} (\textit{High probability bound \cite{Vladimirova_2020}})
Let $X \sim \mathrm{subW}(\theta, \nu)$ according to Definition ~\ref{sec:def_subW}(ii), for some $\theta > 0$ and $\nu > 0$. Then, for any $\delta \in (0,1)$, the bound:
\begin{equation}
\label{eq:highProbabilitybound}
| X | \leq \ \nu \log^\theta \left(\frac{2}{\delta} \right) \left( \frac{2e}{\theta} \right)^\theta
\end{equation}
\label{sec:h.p}
holds with probability $1-\delta$. \hfill $\Box$
\end{proposition}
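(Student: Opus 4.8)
The plan is to turn the moment bound of Definition~\ref{sec:def_subW}(ii) into an exponential tail bound via Markov's inequality, and then to \emph{invert} that tail bound, expressing the deviation threshold as a function of the prescribed confidence level $\delta$. Concretely, I would first establish the implication (ii)$\Rightarrow$(i) with the stated constant $\nu_1 = (2e/\theta)^\theta \nu$, and then read off the high-probability bound by solving $2\, e^{-(\epsilon/\nu_1)^{1/\theta}} = \delta$ for $\epsilon$.

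For the first step, fix $\epsilon > 0$ and any $k \geq 1$. Since $t \mapsto t^k$ is increasing on $[0,\infty)$, Markov's inequality gives
\[
\bbP[|X| \geq \epsilon] = \bbP[|X|^k \geq \epsilon^k] \leq \frac{\bbE[|X|^k]}{\epsilon^k} = \left(\frac{\|X\|_k}{\epsilon}\right)^k \leq \left(\frac{\nu k^\theta}{\epsilon}\right)^k,
\]
where the last inequality is exactly Definition~\ref{sec:def_subW}(ii). The right-hand side is now a free function of $k$, and I would minimize it. Writing the logarithm as $k[\theta \log k + \log(\nu/\epsilon)]$ and differentiating, the unconstrained minimizer is $k^\star = e^{-1}(\epsilon/\nu)^{1/\theta}$, at which $\nu (k^\star)^\theta/\epsilon = e^{-\theta}$; substituting back yields a bound of the form $\exp(-c\,(\epsilon/\nu)^{1/\theta})$ with $c = \theta/e > 0$. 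Matching constants (and absorbing the factor $2$ and the feasibility slack discussed below) gives precisely
\[
\bbP[|X| \geq \epsilon] \leq 2\, e^{-(\epsilon/\nu_1)^{1/\theta}}, \qquad \nu_1 = (2e/\theta)^\theta \nu,
\]
i.e.\ Definition~\ref{sec:def_subW}(i); this is the equivalence already recorded after Definition~\ref{sec:def_subW}, which may simply be invoked.

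It then remains to invert this tail bound. Setting the right-hand side equal to $\delta$ and solving, $e^{-(\epsilon/\nu_1)^{1/\theta}} = \delta/2$ gives $(\epsilon/\nu_1)^{1/\theta} = \log(2/\delta)$, hence $\epsilon = \nu_1 \log^\theta(2/\delta) = \nu\,(2e/\theta)^\theta \log^\theta(2/\delta)$. For this choice of $\epsilon$ we have $\bbP[|X| \geq \epsilon] \leq \delta$, equivalently $\bbP[|X| \leq \epsilon] \geq 1-\delta$, which is exactly~\eqref{eq:highProbabilitybound}.

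The one place that needs care is the constraint $k \geq 1$ in the optimization: when $\epsilon$ is small relative to $\nu$ the unconstrained optimizer $k^\star$ drops below $1$, where the moment bound of Definition~\ref{sec:def_subW}(ii) is not assumed and cannot be used. In that regime one argues separately---either the asserted bound already exceeds the trivial value $1$ for the probability and is vacuously true, or one falls back to $k=1$---and it is this case split, together with the bookkeeping needed to land exactly on the factor $(2e/\theta)^\theta$, that accounts for the extra factor of $2$ relative to the sharp exponent $\theta/e$ obtained above. If the stated (ii)$\Rightarrow$(i) implication is taken as given, this obstacle is bypassed entirely and only the elementary inversion of the preceding paragraph remains.
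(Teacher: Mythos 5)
The paper itself offers no proof of this proposition: it is imported from the cited reference, and the only ingredient the paper records is the conversion constant $\nu_1 = (2e/\theta)^\theta \nu_2$ linking Definition~\ref{sec:def_subW}(ii) to Definition~\ref{sec:def_subW}(i). Your argument is correct and fills in exactly what that citation hides: Markov's inequality applied to $|X|^k$ gives $\bbP[|X|\geq\epsilon]\leq(\nu k^\theta/\epsilon)^k$, the unconstrained optimum $k^\star=e^{-1}(\epsilon/\nu)^{1/\theta}$ yields the exponent $\tfrac{\theta}{e}(\epsilon/\nu)^{1/\theta}$, and since $2e^{-u/2}\geq e^{-u}$ for all $u\geq 0$ this dominates the tail bound with $\nu_1=(2e/\theta)^\theta\nu$ whenever $k^\star\geq 1$; the final inversion $\epsilon=\nu_1\log^\theta(2/\delta)$ is elementary and lands exactly on \eqref{eq:highProbabilitybound}. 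The one genuinely delicate point is the one you flag: when $k^\star<1$ (i.e.\ $\epsilon<\nu e^\theta$) the moment bound cannot be used at $k^\star$, and the ``vacuous because the bound exceeds $1$'' escape only covers $\epsilon\leq\nu_1(\log 2)^\theta$, which contains $[0,\nu e^\theta)$ only for $\theta\leq 2\log 2$; for larger $\theta$ there is a residual window requiring the $k=1$ fallback and some constant-chasing. Since you explicitly note that the (ii)$\Rightarrow$(i) implication with this exact constant is already recorded in the paper after Definition~\ref{sec:def_subW} and may be invoked, this does not constitute a gap; with that implication granted, your remaining step (solve $2e^{-(\epsilon/\nu_1)^{1/\theta}}=\delta$ for $\epsilon$) is a complete and correct proof of the proposition.
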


\section{Online Algorithm} \label{sec:algo}

In this section, we consider an online feedback-based algorithm of the form~\eqref{eq:update_ideal} to solve \eqref{eq:problem1} where: (i) we utilize noisy measurements of the output $\hat{\by}_t = \mathcal{M}(\bx_t, \bw_t) + \mathbf{n}_t$, where $\mathbf{n}_t \in \mathbb{R}^y$ is a measurement noise, instead of requiring full knowledge of the map $\mathcal{M}$ and of the vector $\bw_t$; (ii) we rely on \emph{inexact} gradient information; and, (iii) the measurements of $\by_t$ may not be received at each iteration. Accordingly, the online algorithm  is as follows (where we recall that $t \in \mathbb{N}$ is the time index):
\begin{align} \label{eq:algorithm}
\bx_{t} &=
    \begin{cases}
    \mathrm{proj}_{\calX_t} \left[\bx_{t-1} - \alpha (p_{t}(\hat{\by}_{t-1}) + s_t(\bx_{t-1})) \right] & \\
    & \hspace{-2.7cm} \text{if $\hat{\by}_{t-1}$ is received}\\
    \mathrm{proj}_{\calX_t} \left[\bx_{t-1} \right] &\hspace{-2.7cm} \text{if $\hat{\by}_{t-1}$ is not received,} 
    \end{cases} 
\end{align}
where $s_t(\bx) := \nabla U_t(\bx) + \bvarepsilon_t$ and $p_t({\hat{\by}}) := \bJ_t^\top {\nabla} C_{t}({\by}) + \bxi_t$  
are approximations of the gradients $\nabla U_t(\bx)$ and $\bJ_t^\top {\nabla} C_{t}({\mathcal{M}(\bx, \bw_t)}) $, respectively, with $\bvarepsilon_t \in \mathbb{R}^M$ and $\bxi_t \in \mathbb{R}^M$ random vectors that model the gradient errors. As discussed in Section~\ref{sec:introduction}, errors in the gradients emerge when a concurrent learning approach is utilized to estimate the functions $\bx \mapsto U_t(\bx)$ and $\by \mapsto C_t(\by)$ from samples using, e.g., GPs~\cite{GP_userfeedback},  parametric methods~\cite{notarnicola2020distri}, non-parametric methods~\cite{hastie2009elements}, or neural networks~\cite{Marchi2022}. Additional sources of errors include the noise affecting the measurement of $\by_t$. Hereafter, we denote the overall gradient error as $\be_t := \bvarepsilon_{t} + \bxi_{t}$. 

We rewrite~\eqref{eq:algorithm} in the following manner: 
\begin{align} 
    \bx_{t} &= \mathrm{proj}_{\calX_t} \left[\bx_{t-1} - v_{t-1} \, \alpha \left(p_{t}(\hat{\by}_{t-1}) + s_t(\bx_{t-1}) \right) \right],
    \label{eq:update_case2}
\end{align}
where $v_t$ is a rv taking values in the set $\{0,1\}$, and  is used to indicate whether the measurement of the system output is received or not. When a measurement is not received, we still utilize a projection onto the time-varying set $\calX_t$. In the following, we introduce the main assumptions and we analyze the performance of the online algorithm~\eqref{eq:update_case2}. 
\subsection{Assumptions}
                               
Here, we outline the main assumptions used in the paper. 

\begin{assumption} \label{as:set_x} 
The set $\calX_t \subseteq \bbR^M$ is non-empty, convex and compact for all $t$.
\end{assumption}

\begin{assumption} \label{as:fun_f} 
For any $\bw \in \mathbb{R}^w$, the function $\bx \mapsto C_t(\mathcal{M}(\bx, \bw))$ is convex over $\bbR^M$, $\forall \, t$. Moreover, the composite function $\bx \mapsto f_t(\bx)$ is $\mu_t$-strongly convex and $L_t$-smooth over $\bbR^M$, for some  $0 < \mu_t \leq L_t < \infty$ and $\forall \, t$.  
\end{assumption}

We recall that $f_t(\bx)$ is $L_t$-smooth over $\bbR^M$, for some $L_t \geq 0$, if it is differentiable and  $ \|\nabla f_t(\bx) - \nabla f_t(\by) \| \leq L_t \| \bx - \by \|, \forall \, \bx, \by \in \bbR^M$. The previous assumptions imply that the norm of the gradient of $f_t$ is bounded over the compact set $\calX_t$. Assumption~\ref{as:fun_f} also implies that there is a unique optimizer $\bx_{*,t}$ for each $t$.  Furthermore, the map $\calI - \alpha \nabla f_t: \bbR^M \to  \bbR^M$ is $\zeta_t$-Lipschitz with $\zeta_t = \text{max}\{|1-\alpha \mu_t|, |1 - \alpha L_t| \}$ (see \cite[Section 5.1]{Ryu_2016});  if $\alpha \in \left(0, \frac{2}{L} \right), \, L = \sup_{1 \leq i \leq t} \{L_i\}$, then $\zeta_t < 1$; in other words, the map $\calI - \alpha \nabla f_t$ is contractive. 

\begin{assumption} \label{as:rv_v} 
The rv $v_t$ is Bernoulli distributed with parameter $p := \bbP [v_t = 1] > 0$. The rvs $\{v_t\}_{t \in \bbN \cup \{0\}}$ are i.i.d..
\end{assumption}

\begin{assumption} \label{as:error} 
For all $t \in \mathbb{N} \cup \{0\}$, $\exists~\theta_{\varepsilon} >0, \nu_{\varepsilon,{t}} >0$ s.t. each entry of the vector $\bvarepsilon_{t}$ is $\mathrm{subW} (\theta_{\varepsilon},\nu_{\varepsilon, {t}})$ according to Definition \ref{sec:def_subW}(ii). Moreover, $\bvarepsilon_t$ is independent of $v_t$ $\forall~t$.
\end{assumption}

\begin{assumption} \label{as:error2} 
For all $t \in \mathbb{N} \cup \{0\}$, $\exists~\theta_{\xi} > 0, \nu_{\xi, {t}}>0$ s.t. each entry of the vector $\bxi_{t}$ is $\mathrm{subW} (\theta_{\xi},\nu_{\xi,{t}})$, according to Definition \ref{sec:def_subW}(ii). Moreover,  $\bxi_t$ is independent of $v_t$ $\forall~t$.
\end{assumption}

The following lemma is then presented.  
\begin{lemma} \label{lm:error}
Suppose that Assumptions \ref{as:error}-\ref{as:error2} hold. Then, $\|\be_t\|$  is a sub-Weibull rv and, in particular, $\| \be_t \| \sim \mathrm{subW} (\max \{\theta_{\varepsilon}, \theta_{\xi} \}, (2^{\theta_{\varepsilon}} \sqrt{M} \nu_{\varepsilon, {t}}) + (2^{\theta_{\xi}} \sqrt{M} \nu_{\xi, {t}})), \, \forall~t$. \hfill $\Box$

\end{lemma}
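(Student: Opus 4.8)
The plan is to first establish that $\|\bvarepsilon_t\|$ and $\|\bxi_t\|$ are individually sub-Weibull and then combine them. The combination step is immediate once the individual bounds are in hand: by the triangle inequality $0 \le \|\be_t\| = \|\bvarepsilon_t + \bxi_t\| \le \|\bvarepsilon_t\| + \|\bxi_t\|$, and Proposition~\ref{sec:closure}(c) (whose ``Sum'' rule explicitly permits \emph{dependent} summands, exactly what is needed since $\bvarepsilon_t$ and $\bxi_t$ need not be independent) shows that $\|\bvarepsilon_t\| + \|\bxi_t\| \sim \mathrm{subW}(\max\{\theta_\varepsilon,\theta_\xi\}, 2^{\theta_\varepsilon}\sqrt{M}\nu_{\varepsilon,t} + 2^{\theta_\xi}\sqrt{M}\nu_{\xi,t})$. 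It then remains only to note that a nonnegative random variable dominated pointwise by a sub-Weibull one is itself sub-Weibull with the same parameters: working through Definition~\ref{sec:def_subW}(ii), $0 \le \|\be_t\| \le \|\bvarepsilon_t\| + \|\bxi_t\|$ yields $(\bbE[\|\be_t\|^k])^{1/k} \le (\bbE[(\|\bvarepsilon_t\| + \|\bxi_t\|)^k])^{1/k}$ for every $k \ge 1$, so the moment bound transfers verbatim and gives exactly the stated parameters.

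The core of the argument is therefore to show $\|\bvarepsilon_t\| \sim \mathrm{subW}(\theta_\varepsilon, 2^{\theta_\varepsilon}\sqrt{M}\nu_{\varepsilon,t})$, the bound for $\|\bxi_t\|$ being identical. I would proceed by squaring. For the $m$-th entry $\varepsilon_{t,m}$, a direct moment manipulation gives $\|\varepsilon_{t,m}^2\|_k = \|\varepsilon_{t,m}\|_{2k}^2 \le (\nu_{\varepsilon,t}(2k)^{\theta_\varepsilon})^2 = 2^{2\theta_\varepsilon}\nu_{\varepsilon,t}^2\,k^{2\theta_\varepsilon}$ for all $k \ge 1$, so $\varepsilon_{t,m}^2 \sim \mathrm{subW}(2\theta_\varepsilon, 2^{2\theta_\varepsilon}\nu_{\varepsilon,t}^2)$; importantly this step never invokes independence, as it only rearranges the moments of a single entry. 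Summing over the $M$ entries and applying Proposition~\ref{sec:closure}(c) then yields $\|\bvarepsilon_t\|^2 = \sum_{m=1}^M \varepsilon_{t,m}^2 \sim \mathrm{subW}(2\theta_\varepsilon, M\,2^{2\theta_\varepsilon}\nu_{\varepsilon,t}^2)$.

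It remains to pass from $Y := \|\bvarepsilon_t\|^2$ back to $\|\bvarepsilon_t\| = \sqrt{Y}$. Using $\|\sqrt{Y}\|_k = \|Y\|_{k/2}^{1/2}$, for $k \ge 2$ I apply Definition~\ref{sec:def_subW}(ii) to $Y$ (legitimate since $k/2 \ge 1$) to obtain $\|\sqrt{Y}\|_k \le (M\,2^{2\theta_\varepsilon}\nu_{\varepsilon,t}^2)^{1/2}(k/2)^{\theta_\varepsilon} = \sqrt{M}\,\nu_{\varepsilon,t}\,k^{\theta_\varepsilon} \le 2^{\theta_\varepsilon}\sqrt{M}\,\nu_{\varepsilon,t}\,k^{\theta_\varepsilon}$; for $1 \le k < 2$, monotonicity of the $L^k$-norms gives $\|\sqrt{Y}\|_k \le \|\sqrt{Y}\|_2 = \|Y\|_1^{1/2} \le 2^{\theta_\varepsilon}\sqrt{M}\,\nu_{\varepsilon,t} \le 2^{\theta_\varepsilon}\sqrt{M}\,\nu_{\varepsilon,t}\,k^{\theta_\varepsilon}$. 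Combining the two ranges establishes $\|\bvarepsilon_t\| \sim \mathrm{subW}(\theta_\varepsilon, 2^{\theta_\varepsilon}\sqrt{M}\nu_{\varepsilon,t})$, completing the plan.

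The only genuinely delicate point is this square-root step: Definition~\ref{sec:def_subW}(ii) controls $\|Y\|_{k/2}$ only when $k/2 \ge 1$, so the low-order regime $1 \le k < 2$ must be handled separately, which is precisely what forces the monotonicity argument and accounts for the surviving factor $2^{\theta_\varepsilon}$ in the stated parameter (for $k \ge 2$ alone one would get the smaller constant $\sqrt{M}\nu_{\varepsilon,t}$). Everything else is routine bookkeeping of $\mathrm{subW}$ parameters through Proposition~\ref{sec:closure}.
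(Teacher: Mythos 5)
Your proposal is correct and follows essentially the same route the paper indicates: establish that each of $\|\bvarepsilon_t\|$ and $\|\bxi_t\|$ is $\mathrm{subW}(\theta, 2^{\theta}\sqrt{M}\nu)$ (the content of the cited Lemma 3.4 of \cite{bastianello2021stochastic}, which you re-derive in full via the square/sum/square-root moment argument) and then combine them with the triangle inequality and Proposition~\ref{sec:closure}(c). The only difference is that you supply the details of the vector-norm step that the paper delegates to the external reference, and your handling of the $1 \leq k < 2$ regime correctly accounts for the $2^{\theta}$ factor in the stated parameter.
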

This lemma can be proved by using~\cite[Lemma 3.4]{bastianello2021stochastic} and part (c) of Proposition~\ref{sec:closure}; the proof is omitted.

\subsection{Convergence Analysis} \label{sec:performance}

Our analysis seeks bounds on the error  $\| \bx_t - \bx_{*,t} \|$, $t \in \mathbb{N}$,  where we recall that $\bx_{*,t}$ is the unique optimal solution of~\eqref{eq:problem1} at time $t$. To this end, we introduce the well-known definition of  path length  $\phi_{t}~:=~\|\bx_{*,t}-\bx_{*,t+1}\|$, which measures the temporal variability of the optimal solution of~\eqref{eq:problem1}. Finally, we let $E_t := \bbE [\|\be_t\|]$ to make the notation lighter. The main result of the paper is stated in the following.

\vspace{.1cm}

\begin{theorem} \label{th:1}
Let Assumptions \ref{as:set_x}-\ref{as:error2} hold, and let $\{\bx_t\}_{t \in \bbN} $ be a sequence generated by \eqref{eq:update_case2} for a given initial point $\bx_0 \in \mathcal{X}_0$. Recall that $\alpha \in (0, \frac{2}{L})$ and $\zeta_t = \text{max}\{|1-\alpha \mu_t|, |1 - \alpha L_t| \}$.
Then, the following holds for all $t \in \mathbb{N}$: 

\noindent \emph{\ref{th:1}(i)} The mean $\bbE \left[ \|\bx_{t} - \bx_{*, t}\| \right]$ is bounded as:  \vspace{-0.2cm} 
$$ \bbE \left[ \|\bx_{t} - \bx_{*, t}\| \right] \leq \beta_t \bbE [ \| \bx_0 - \bx_{*,0} \| ] + \sum_{i=1}^{t} [ \kappa_i \phi_{i-1} +  \alpha p \omega_i E_i ], $$ \vspace{-0.1cm} 
where, defining $\rho_t(\alpha) := 1 - p + p \zeta_t$, we have
\begin{equation*}
    \beta_{t} : = \prod_{i=1}^{t} \rho_i (\alpha), \quad
    \kappa_i = \begin{cases}
    \rho_i(\alpha) & \mathrm{if} \, i = {t}\\
    \prod_{k=i}^{t} \rho_k (\alpha) & \mathrm{if} \, i \neq t
    \end{cases},
\end{equation*} \vspace{-0.1cm} 
\begin{equation*}
    \omega_i = \begin{cases}
    1 & \mathrm{if} \, i = t\\
    \prod_{k=i+1}^{t} \rho_k (\alpha) & \mathrm{if} \, i \neq t
    \end{cases}.
\end{equation*}

\noindent \emph{\ref{th:1}(ii)} For any $\delta \in (0,1)$, the following bound holds with probability $1-\delta$: \vspace{-0.09cm} 
\begin{align}
    &\|\bx_{t} - \bx_{*,t}\| \leq \log^{\theta_x} \left(\frac{2}{\delta} \right) \left( \frac{2e}{\theta_x} \right)^{\theta_x} \bigg(  \eta(t) \| \bx_0 - \bx_{*,0} \| \nonumber \\
    & \hspace{2.5cm}  + \frac{1 - \zeta^t}{1 - \zeta} \sup_{0 \leq i \leq t} \left\{ \alpha \nu_{\me,i} + p^{-1} \phi_i  \right\} \bigg), \label{eq:highProb}
\end{align}
where $\zeta := \sup_{1 \leq i \leq t}{\zeta_i}$, $\theta_x := \max\{1, \theta_{\varepsilon}, \theta_{\xi} \} $, $\nu_{\me,t} :=  (2^{\theta_\varepsilon} \sqrt{M} \nu_{\varepsilon,{t}}) + (2^{\theta_\xi} \sqrt{M} \nu_{\xi,{t}})$, and where the function $t \mapsto \eta(t)$ is defined as $\eta(t)~:=~\max_k \left\{ \frac{\left( 1 - p + \zeta^k p \right)^{\frac{t}{k}} }{\sqrt{k}} \right\}$.  
\end{theorem}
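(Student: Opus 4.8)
The plan is to reduce both claims to a single path-wise scalar recursion for the tracking error $d_t := \|\bx_t - \bx_{*,t}\|$, and then to process that recursion by taking expectations for \ref{th:1}(i) and by bounding its moments $\|\cdot\|_k$ for \ref{th:1}(ii). To obtain the recursion I would first use the fixed-point identities $\bx_{*,t} = \mathrm{proj}_{\calX_t}[\bx_{*,t} - \alpha\nabla f_t(\bx_{*,t})] = \mathrm{proj}_{\calX_t}[\bx_{*,t}]$ and write the inexact gradient as $p_t(\hat{\by}_{t-1}) + s_t(\bx_{t-1}) = \nabla f_t(\bx_{t-1}) + \be_t$. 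Treating the two branches of \eqref{eq:update_case2} separately and invoking the nonexpansiveness of the projection together with the $\zeta_t$-Lipschitz (contraction) property of $\calI - \alpha\nabla f_t$ recalled after Assumption~\ref{as:fun_f}, the active branch ($v_{t-1}=1$) yields $d_t \le \zeta_t\|\bx_{t-1}-\bx_{*,t}\| + \alpha\|\be_t\|$ and the idle branch ($v_{t-1}=0$) yields $d_t \le \|\bx_{t-1}-\bx_{*,t}\|$. Inserting $\pm\bx_{*,t-1}$ and using $\|\bx_{*,t-1}-\bx_{*,t}\| = \phi_{t-1}$, both branches combine into the single bound
\begin{equation*}
 d_t \le \big(1 - v_{t-1}(1-\zeta_t)\big)(d_{t-1} + \phi_{t-1}) + v_{t-1}\,\alpha\|\be_t\|.
\end{equation*}

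For \ref{th:1}(i) I would take expectations of this inequality. The essential observation is that $\bx_{t-1}$, hence $d_{t-1}$, is a function of $v_0,\dots,v_{t-2}$ and $\be_1,\dots,\be_{t-1}$ and is therefore independent of $v_{t-1}$, while $\be_t$ is independent of $v_{t-1}$ by Assumptions~\ref{as:error}--\ref{as:error2}; with $\bbE[v_{t-1}]=p$ this factorizes the products and gives $\bbE[d_t] \le \rho_t(\alpha)(\bbE[d_{t-1}]+\phi_{t-1}) + \alpha p\,E_t$. Unrolling this scalar linear recursion from $t$ down to the deterministic $d_0 = \|\bx_0-\bx_{*,0}\|$ and regrouping the partial products of the $\rho_i(\alpha)$ into $\beta_t$, $\kappa_i$ and $\omega_i$ exactly as defined in the statement delivers \ref{th:1}(i); this step is routine bookkeeping.

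For \ref{th:1}(ii) I would first replace each $\zeta_t$ by $\zeta = \sup_i \zeta_i \ge \zeta_t$, which only enlarges the right-hand side since it multiplies the nonnegative variable $v_{t-1}$, so that the contraction coefficient becomes $\gamma := 1 - v_{t-1}(1-\zeta)\in\{1,\zeta\}$. I would then propagate the recursion through the norm $\|\cdot\|_k$ rather than the expectation: Minkowski's inequality followed by factorization of the independent products gives $\|d_t\|_k \le \|\gamma\|_k\|d_{t-1}\|_k + \|\gamma\|_k\phi_{t-1} + \alpha\|v_{t-1}\|_k\big\|\,\|\be_t\|\,\big\|_k$, where the two moment computations $\|\gamma\|_k = (1-p+p\zeta^k)^{1/k} =: q_k$ and $\|v_{t-1}\|_k = p^{1/k}$ are central, and where Lemma~\ref{lm:error} supplies $\big\|\,\|\be_t\|\,\big\|_k \le \nu_{\me,t}\,k^{\max\{\theta_\varepsilon,\theta_\xi\}}$. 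Unrolling the resulting scalar recursion expresses $\|d_t\|_k$ as the sum of an initial-condition term $q_k^{\,t}\,\|\bx_0-\bx_{*,0}\|$ and two geometric-type sums of the path lengths $\phi_{i-1}$ and the noise scales $\nu_{\me,i}$ weighted by powers of $q_k$. The goal is to show that this bound has the form $\|d_t\|_k \le \nu_x\,k^{\theta_x}$ for every $k \ge 1$, with $\theta_x = \max\{1,\theta_\varepsilon,\theta_\xi\}$ and $\nu_x$ equal to the parenthesized factor of \eqref{eq:highProb}; once this is established, $d_t\sim\mathrm{subW}(\theta_x,\nu_x)$ and Proposition~\ref{sec:h.p} yields \eqref{eq:highProb} verbatim. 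The initial-condition term is exactly what motivates the definition of $\eta(t)$: since $q_k^{\,t}$ is the $k$-th norm of the bounded random product $\prod_j\gamma_j$, the quantity $\eta(t) = \max_k\{q_k^{\,t}/\sqrt{k}\}$ is precisely the minimal sub-Gaussian (i.e. $\mathrm{subW}(1/2,\cdot)$) scale of that product, and Proposition~\ref{sec:inclusion} lets me absorb it into the $\mathrm{subW}(\theta_x,\cdot)$ class.

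The main obstacle is the last step of \ref{th:1}(ii): converting the $q_k$-weighted sums into the clean deterministic geometric factor $\tfrac{1-\zeta^t}{1-\zeta}$ and the $p^{-1}$ of the statement, uniformly over all $k \ge 1$. The difficulty is intrinsic to the random multiplicative coefficients: the $k$-th norm $q_k = (1-p+p\zeta^k)^{1/k}$ of each coefficient is not a single constant but increases toward $1$ as $k\to\infty$, so the partial-product weights do not decay at the fixed rate $\zeta$ and their sums are not dominated by their $k=1$ values. Controlling them requires elementary but careful estimates --- in particular the comparison $q_k \ge \zeta$ (so that $1-q_k^{\,t}\le 1-\zeta^t$) together with a Bernoulli-type lower bound $1 - q_k \ge p(1-\zeta)/k^{\theta_x}$, valid because $\theta_x\ge1$ and $1-\zeta^k\ge1-\zeta$, which is exactly where the $p^{-1}$ and the normalization $\tfrac{1-\zeta^t}{1-\zeta}$ emerge, and where the choice $\theta_x = \max\{1,\theta_\varepsilon,\theta_\xi\}$ is needed to keep the overall moment growth at the claimed order $k^{\theta_x}$. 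Getting these constants to line up simultaneously for all $k$ --- in particular for the noise sum, where the moment growth $k^{\max\{\theta_\varepsilon,\theta_\xi\}}$ of $\|\be_i\|$ must be reconciled with the weight growth without exceeding $k^{\theta_x}$ --- rather than settling for the crude $k=1$ estimate, is the delicate heart of the argument.
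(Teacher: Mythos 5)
Your pathwise recursion $d_t \le (1-v_{t-1}(1-\zeta_t))(d_{t-1}+\phi_{t-1}) + v_{t-1}\alpha\|\be_t\|$ is exactly the paper's inequality (e), and your treatment of part (i) — take expectations, factorize by independence of $v_{t-1}$ from $d_{t-1}$ and $\be_t$, unroll — coincides with the paper's proof. For part (ii) you diverge: the paper unrolls the recursion \emph{pathwise} and then characterizes the resulting random sums via the sub-Weibull closure properties, whereas you push the recursion through the moment norms $\|\cdot\|_k$ term by term. Your handling of the initial-condition term ($q_k^{\,t} = \|\zeta^{\Omega_t}\|_k$, leading to the same $\eta(t)$) and of the path-length term (where Bernoulli's inequality gives $1-q_k \ge p(1-\zeta)/k$, the factor $k \le k^{\theta_x}$ is absorbed since $\theta_x\ge 1$, and the $p^{-1}$ emerges) both work, and the latter is arguably cleaner than the paper's informal geometric-random-variable argument.

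However, there is a genuine gap in your noise term, precisely at the step you call the delicate heart. Term-by-term Minkowski plus independence gives the weight $\|v_{i-1}\prod_{j\ge i}\gamma_j\|_k = p^{1/k} q_k^{\,t-i}$, and summing over $i$ yields $p^{1/k}\tfrac{1-q_k^t}{1-q_k} \le p^{1/k-1}\,k\,\tfrac{1-\zeta^t}{1-\zeta}$; multiplied by $\|\,\|\be_i\|\,\|_k \le \nu_{\me,i}k^{\theta_\me}$ this produces moment growth of order $k^{\theta_\me+1}$ (plus a stray $p^{-1}$), which strictly exceeds the required $k^{\theta_x}$ with $\theta_x=\max\{1,\theta_\varepsilon,\theta_\xi\}$. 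This is not fixable by sharper elementary estimates: for $\theta_\me=1/2$, $\theta_x=1$ one can check that $\sup_k k^{-1}\,p^{1/k}k^{1/2}\sum_{i=0}^{t-1}q_k^{\,i}$ grows like $\sqrt{t}$ (take $k\asymp t$), while the target $\tfrac{1-\zeta^t}{1-\zeta}$ stays bounded, so the inequality you need is simply false for large $t$. The loss comes from decoupling $v_{i-1}$ from the contraction product: almost surely $v_{i-1}=1$ forces $\gamma_{i-1}=\zeta$, so each noise injection is followed by one $\zeta$-contraction per subsequent update. The paper exploits this by reindexing over the update times $\{i_j\}$, obtaining
\begin{equation*}
\sum_{i=1}^{t} \zeta^{\left(\sum_{k=i+1}^{t} v_k\right)} v_i \|\be_i\| \;=\; \sum_{j=1}^{\Omega_t} \zeta^{\Omega_t - j}\|\be_{i_j}\| \;\le\; \sum_{j=1}^{t}\zeta^{\,j}\|\be_{t-i_j}\|
\end{equation*}
\emph{surely}, i.e.\ a deterministic geometric combination of sub-Weibull variables, to which Proposition~\ref{sec:closure}(a),(c) applies with parameter $\alpha\tfrac{1-\zeta^t}{1-\zeta}\sup_i\nu_{\me,i}$ and tail exponent $\theta_\me$ — no extra factor of $k$ and no $p^{-1}$. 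You need this (or an equivalent almost-sure bound on the weighted noise sum) before passing to moments; with that substitution the rest of your argument goes through.
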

\begin{proof}
See the Appendix.
\end{proof}

\vspace{0.1cm}

We note that, if $\alpha \in \left(0, \frac{2}{L} \right)$, then $\rho_t(\alpha) < 1$ for all $t$; in this case, 
letting $\rho(\alpha) := \sup_{1 \leq i \leq t} \{\rho_i(\alpha)\}$, the claim of Theorem \ref{th:1}(i) implies that 
\begin{align}
     \bbE \left[ \|\bx_{t} - \bx_{*, t}\| \right] & \leq \rho(\alpha)^t \bbE [ \| \bx_0 - \bx_{*,0} \| ] \nonumber \\
     & \hspace{-1.6cm} + \frac{1}{1 - \rho(\alpha)} \sup_{0 \leq i \leq t}\{ \phi_i \}  +  \frac{\alpha  p}{1 - \rho(\alpha)} \sup_{0 \leq i \leq t} \{E_i \}  \label{eq:bound_exp}
\end{align}
where the term $\rho(\alpha)^t \bbE [ \| \bx_0 - \bx_{*,0} \| ] \rightarrow 0$ as $t \rightarrow \infty$. From~\eqref{eq:bound_exp}, it is evident that the error $\|\bx_{t} - \bx_{*, t}\|$  is asymptotically bounded in expectation, with an error bound that depends on the variability of the optimal solution, the mean of the norm of the gradient error, and the Bernoulli parameter $p$.  
We also draw a link with stability of stochastic discrete-time systems~\cite{teel2013equivalent,jiang2001input} by noting that~\eqref{eq:bound_exp} establishes that the stochastic algorithm~\eqref{eq:update_case2} renders the set $\{0\}$ exponentially input-to-state stable (E-ISS) in expectation. 

Theorem \ref{th:1}(ii) asserts that $\|\bx_{t} - \bx_{*, t}\|$ is bounded in high probability. Since $t \mapsto \eta(t)$ is monotonically decreasing,  $\eta(t) \rightarrow 0$ as $t~\rightarrow~\infty$; thus, Theorem \ref{th:1}(ii) provides an asymptotic error bound that holds in high probability. We also note that the first term on the right-hand-side of~\eqref{eq:highProb} is a $\mathcal{KL}$ function~\cite{jiang2001input}; upper-bounding the second term as $\frac{1}{1 - \zeta} \sup_{0 \leq i \leq t} \left\{ \alpha \nu_{\me,i} + p^{-1} \phi_i \right\}$, we obtain a result in terms of  ISS in high-probability  under a sub-Weibull error model.

\section{Optimization with Concurrent GP Learning} \label{sec:pre_GP} 

In this section, we provide an example of a framework where the cost function is estimated using GPs during the execution of the online algorithm~\eqref{eq:update_case2}. We will show that the results of Theorem \ref{th:1} directly apply to this framework. 

To streamline exposition, suppose that the function $\by \mapsto C_t(\by)$ is known, and the function $\bx \mapsto U(\bx)$ is static but unknown (a similar approach can be used for time-varying functions). Furthermore, suppose that $U(\bx) = \sum_{m=1}^M u_{m}(x_m)$ where $u_m: \mathbb{R} \rightarrow \mathbb{R}$ is a cost associated with the $m$-th input. We then consider a concurrent learning approach where we estimate each function $x_m \mapsto u_{m}(x_m)$ via GP, based on noisy functional evaluations~\cite{GP_userfeedback}.

A GP is a stochastic process and is specified by its mean function and its covariance function~\cite{GPML_Rasmussen06}. Accordingly, let $u_{m}(x)$ be characterized by a GP, i.e., for any $x, x' \in \mathcal{X} \subseteq \mathbb{R}$, $\mu_{m}({x}) = \mathbb{E}[u_{m}({x})]$ and $k_{m}({x}, {x}') = \mathbb{E}[(u_{m}({x}) - \mu_{m}({x}))(u_{m}({x}') - \mu_{m}({x}'))]$. Let $\bchi_{m,t} = [ x_{m,t_1}  \in \mathcal{X}, \dots, x_{m,t_q}  \in \mathcal{X}]^\top$ be the set of $q$ sampling points at times $\{t_i\}_{i = 1}^q \subset \{0, \ldots, t\}$; let ${z}_{m,t_i}  = u({x}_{m,t_i}) + \varpi_{t_i}$, with $\varpi_{t_i} \stackrel{\mathrm{iid}}{\sim} \mathcal{N}({0}, \sigma^2)$ Gaussian noise, be the noisy functional evaluation at $x_{m,t_i}$; finally, define $\mathbf{z}_{m,t} = [z_{m,t_1}, \dots, z_{m,t_q}]^\top$. Then, the posterior distribution of $(u_{m}(x)|\bchi_{m,t}, \bz_{m,t})$ is a GP  with mean $\mu_{m,t}({x})$, covariance $k_{m,t}({x}, {x}')$, and variance $\varsigma^2_{m,t}(x)$ given by~\cite{GPML_Rasmussen06}:
\begin{subequations} \label{eq:GP_update}
\begin{align}
    \mu_{m,t}({x}) &= \mathbf{k}_{m,t}(x)^\top (\mathbf{K}_{m,t} + \sigma^2 \mathbf{I})^{-1} \mathbf{z}_{m,t}, \label{eq:GP_update_mu} \\
    k_{m,t}({x}, {x}') &= k_m(x,x') - \mathbf{k}_{m,t}(x)^\top (\mathbf{K}_{m,t} + \sigma^2 \mathbf{I})^{-1} \mathbf{k}_{m,t}(x'), \nonumber  \\
    \varsigma_{m,t}^2(x) &= k_m(x,x), \label{eq:GP_update_si}
\end{align}
\end{subequations}
where $\mathbf{k}_{m,t}(x) = [k_m(x_{m,t_1},x), \dots, k_m(x_{m,t_q},x)]^\top$, and $\mathbf{K}_{m,t}$ is the positive definite kernel matrix $[k_m(x,x')]$. For example, using a squared exponential kernel, $k_m({x},{x}')$ is given by $k_m({x},{x}')~=~\sigma_{f}^2 e^{-\frac{1}{2\ell^2}({x}-{x}')^{2}}$, where the hyperparameters are the variance $\sigma_{f}^2$ and the characteristic length-scale $\ell$~\cite{GPML_Rasmussen06}. 

The idea is then to utilize the posterior mean $\mu_{m,t}(x)$, computed via~\eqref{eq:GP_update_mu} based on the samples collected up to the current time $t$, as an estimate of the function $u_m(x)$. Accordingly, the function $U(\bx)$ can be approximated at time $t$ as $ \mu_{t}(\bx) = \sum_{m = 1}^M \mu_{m,t}(x_m)$ and $s_t(\bx)$ in~\eqref{eq:update_case2} can be set to $s_t(\bx) = \nabla \mu_{t}(\bx)$. The resulting GP-based learning framework would involve the sequential execution of the online algorithm~\eqref{eq:update_case2}, with $s_t(\bx) = \nabla \mu_{t}(\bx)$, and where the estimates $\{\mu_{m,t}(x)\}$ are updated via~\eqref{eq:GP_update_mu} whenever a new functional evaluation becomes available. In this case, the $m$-th entry of the gradient error vector $\bvarepsilon_{t}$  can be expressed as $\varepsilon_{m,t}(x_m) = \frac{d u_m}{d x_m}(x_m) - \frac{d \mu_{m,t}}{d x_m}(x_m)$.  

Since the function $u_m(x_m)$ is modeled as a GP, its derivative is also a GP~\cite{GPML_Rasmussen06}. For a given $x_m \in \mathbb{R}$, it follows that the error $\varepsilon_{m,t}(x_m)$ is a Gaussian random variable~\cite{GPML_Rasmussen06} (and, hence, sub-Gaussian~\cite{Vershynin2018}).  Since the  class of sub-Weibull rvs includes sub-Gaussian distributions by simply setting $\theta = 1/2$~\cite{Vladimirova_2020}, it follows that $\varepsilon_{m,t}(x_m) \sim \mathrm{subW}(1/2, \nu_{\varepsilon,t})$, for some $\nu_{\varepsilon,t} > 0$. 

Summarizing, when GPs are utilized to estimate the function $\bx \mapsto U(\bx)$, Assumption~\ref{as:error} is satisfied (similar arguments hold if we utilize GPs to estimate the function $\by \mapsto C_t(\by)$). In particular, it  holds that 
$\|\bvarepsilon_t\| \sim \mathrm{subW}(\theta_{\varepsilon},  2^{\theta_{\varepsilon,{t}}}\sqrt{M} \nu_{\varepsilon,{t}})$, with $\theta_{\varepsilon} = 1/2$.

\section{Numerical Results} \label{sec:sim}

We consider an application in the context of demand response in power distribution systems \cite{Lesage_2020}. Here, $\bx$ is the vector of active power setpoints from controllable distributed energy resources (DERs), $\bw_t$ is the vector of powers consumed by non-controllable loads, $\by_t$ represents the net real power exchanged at some points of common coupling (PCC), and the map $\calM(\bx, \bw_t) =  \bG \bx + \bH \bw_t$ is built based on a linearization of the power flow equations~\cite{bolognani2013distributed}. We assume that the powers consumed by non-controllable loads cannot be individually measured; rather, measurements of $\by_t$ are available from meters and sensing units. The function $U_t(\bx)$ represents the dissatisfaction of the users (e.g., relative to indoor temperature if the DER is an AC unit, or charging rate of an electric vehicle); finally, we consider the function $C_t(\by_t) = \frac{\beta}{2} \|\by_t - \by_{t,\mathrm{ref}} \|^2$, with $\by_{t,\mathrm{ref}}$ a time-varying demand response setpoint for the PCCs, and $\beta > 0$ a given parameter.

\begin{figure}[t!]
  \centering
  \includegraphics[width=0.48\textwidth]{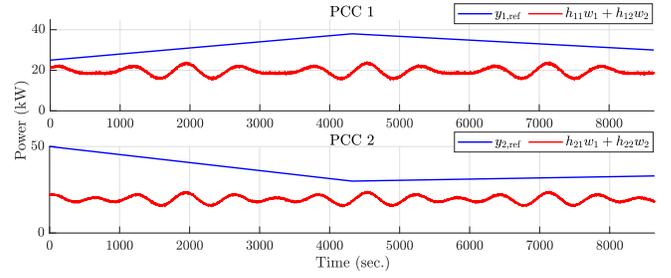}
  \caption{\small Reference signal $\by_{\mathrm{ref}}$ and overall contribution of the non-controllable loads at the point of common coupling. }
  \label{fig:ref}
\end{figure}
\begin{figure}[t!]
  \centering
  \begin{subfigure}[]{\includegraphics[width=0.48\textwidth]{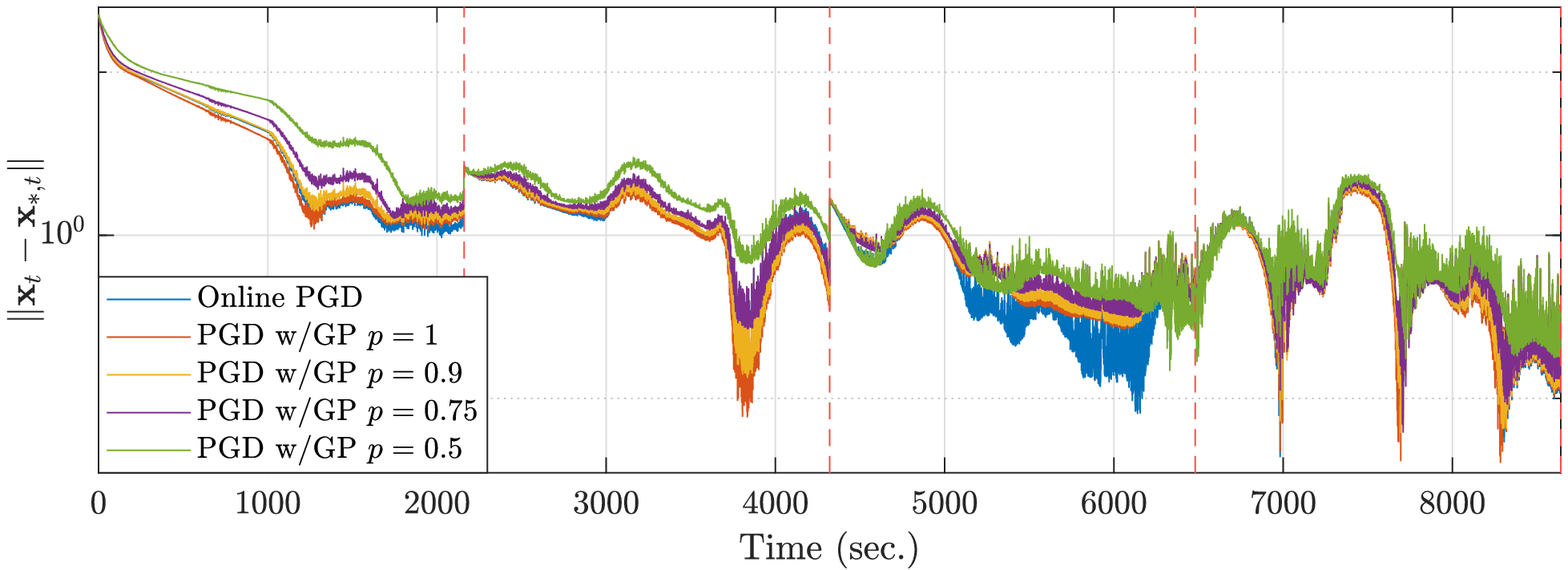}}
  \end{subfigure}
  \begin{subfigure}[]{\includegraphics[width=0.48\textwidth]{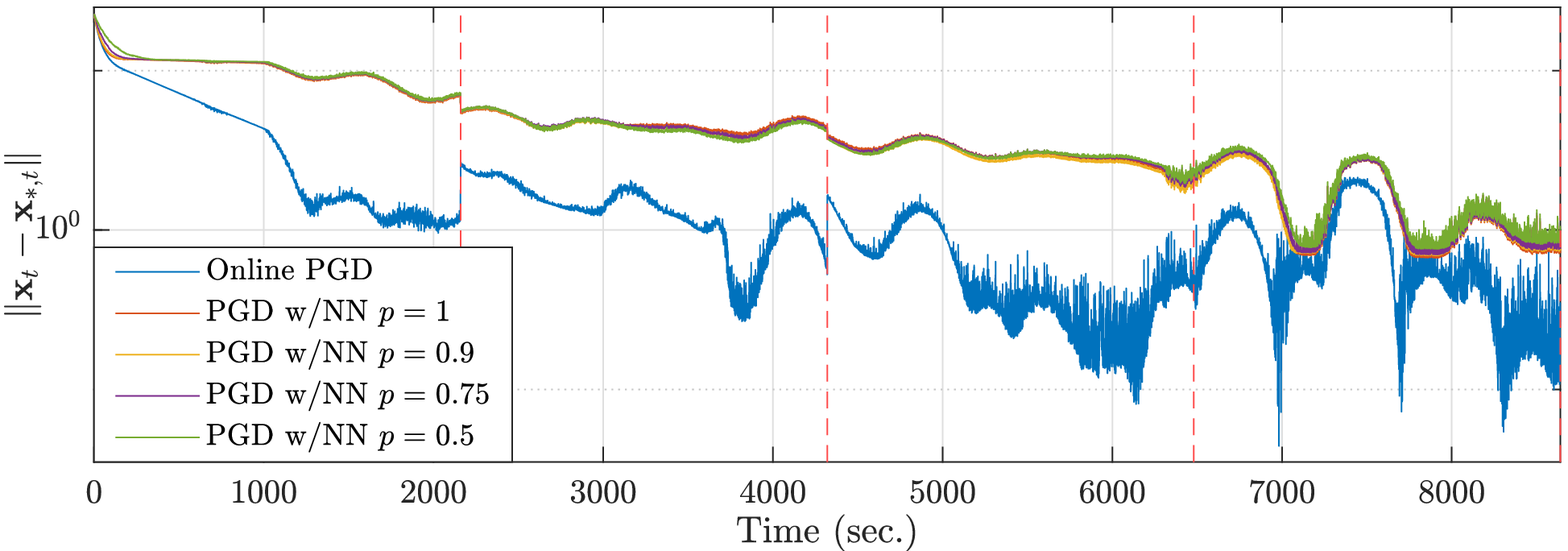}}
  \vspace{-0.2cm}
  \end{subfigure}
  \caption{ \small Error $\|\bx_t - \bx_{*,t}\|$ for different values of $p$. (a) Learning with GPs. (b) Learning with feedforward NNs. In both cases, the blue line corresponds to the online algorithm with exact knowledge of $\{U_{m}\}$. }
  \vspace{-0.4cm}
  \label{fig:tracking_p}
\end{figure}

We consider $M = 6$  controllable DERs (accordingly, $x_m \in \mathbb{R}$ is the active power produced or consumed by the DER $m$), and $2$ PCCs. The power limits for the DERs are time-variant and change within the following bounds: $\calX_{1,t} = [\{-10,-6\}; \{6,10\}]$kW, $\calX_{2,t} = [\{3,7\}; \{13,17\}]$kW, and $\calX_{3,t}= [\{0,3\}; \{28,32\}]$kW. The aggregate power of the non-controllable loads are shown in red in Figure \ref{fig:ref}, and the two demand response setpoints are color-coded in blue. The function $U_t(\bx)$ is unknown; it is assumed that $U_t(\bx)$ switches between two quadratic functions (with different coefficients), to reflect changes in the preferences of the DER-owners (switching times are represented by vertical red lines in Figure \ref{fig:tracking_p}); for example, a DER owner can change preferences for the indoor temperature. We test two learning methods: (i) $U_t(\bx)$ is estimated via GPs as explained in Section~\ref{sec:pre_GP}, and (ii) we use feedforward neural networks (NNs) to estimate the functions associated with the DERs. We evaluate the performance of the algorithm in \eqref{eq:update_case2} over a period equivalent to 12 hours, with each step of the algorithm performed every 5 seconds. We start to learn each of the users' function with 5 noisy observations, and we collect functional evaluations from the DER owners every 30 minutes during the execution of the algorithm. 

Figure \ref{fig:tracking_p}(a) illustrates the performance of the online method when GPs are utilized (we use the labels ``PGD w/GP''), averaged  over $10$ experiments. For each experiment, we use a fixed step size $\alpha = 0.5$ and a  random initial point $\bx_{0}$. We show the mean error for four values of the Bernoulli parameter $p$; for comparison purposes, we run the online proximal gradient method with exact knowledge of $U_t(\bx)$ (labeled as ``online PGD''). The tracking error decreases linearly and then settles to a range of values that depend on $p$ as expected. It can also be seen that, when $p = 1$, the red trajectory and the blue one are very close after $6000$ steps, indicating that the GP approximates well the function $U_t(\bx)$. Figure \ref{fig:tracking_p}(b) presents the case when $U_t(\bx)$ is learned via a feedforward NNs with one hidden layer of size 10 (labeled as ``PGD w/NN''). The online algorithm exhibits a similar behavior; however, the tracking error is in general higher compared to the case where we use GPs. One reason for this behavior is the higher error in the gradient estimation; while GPs offer a closed-form expression for the gradient of the posterior mean, in the case of the NNs we estimated the gradient via centered difference. 

\section{Conclusions} \label{sec:con}

We considered a feedback-based projected gradient method   to solve a time-varying optimization problem associated with a  system modeled with an algebraic map. The algorithm relies on inaccurate gradient information and exhibits random updates.  We derived bounds for the error between the iterate of the algorithm and the optimal solution of the optimization problem in expectation and in high probability, by modeling gradient errors as sub-Weibull rvs and missing measurements as Bernoulli rvs. We established a connection with results in the context of ISS in expectation and in high probability for discrete-time stochastic dynamical systems. 

\bibliographystyle{IEEEtran}
\bibliography{References}

\appendix

\textbf{Proof of Theorem 1}. The proof of the theorem utilizes the definition of sub-Weibull rv in Definition \ref{sec:def_subW}(ii). To derive the main result, it is first necessary to characterize the rv $\zeta^{\Omega_t}$, where $\Omega_t := \sum_{i = 0}^{t - 1} v_i$ and $\zeta := \sup_{1 \leq i \leq t}{\zeta_i}$, $\zeta \in (0,1)$. We will find the parameters $\theta > 0$ and $\eta(t) > 0$ s.t. $\zeta^{\Omega_t}$ can be modeled as $\zeta^{\Omega_t} \sim \mathrm{subW}(\theta, \eta(t))$. By definition, $\Omega_t$ is a binomial rv, i.e., $\Omega_t \sim \calB(p,t)$ since it is the sum of $t$ Bernoulli trials. Further, $\zeta^{\Omega_t} \in [\zeta^{t}, 1)$, which implies that $\zeta^{\Omega_t}$ is a bounded rv. Hence, we can model $\zeta^{\Omega_t}$ as a sub-Gaussian rv~\cite{Vershynin2018} and, thus, a sub-Weibull with $\theta = 1/2$.

Regarding $\eta(t)$, by the definition of the $k$-th moment of a rv, we have that the $k$-th moment of the bounded rv $\zeta^{\Omega_t}$ is \vspace{-0.05cm}
\begin{align*}
    \left\| \zeta^{\Omega_t} \right\|_k^k &= \bbE \left[ \left( \zeta^{\Omega_t} \right)^k \right] = \bbE \left[ \left( \zeta^k \right)^{\Omega_t} \right] \\
    &\stackrel{\mathrm{(a)}}{=} \sum_{h=0}^{t} (\zeta^k)^{h}  {t \choose h} p^h (1-p)^{t-h}\\
    &\stackrel{\mathrm{(b)}}{=} \sum_{h=0}^{t}  {t \choose h} (\zeta^k p)^{h}  (1-p)^{t-h} = (1-p +\zeta^k p )^{t}, 
\end{align*}
where (a) follows by the definition of expected value and probability mass function of the binomial rv; and, (b) uses the binomial identity. Thus, \vspace{-0.05cm}
\begin{equation} \label{eq:k_bound}
    \left\| \zeta^{\Omega_t} \right\|_k = (1-p +\zeta^k p )^{\frac{t}{k}}. 
\end{equation} 
We can see from \eqref{eq:k_bound} that the $k$-th moment of $\zeta^{\Omega_t}$, for a fixed $k$, decays to zero as $t \to \infty$. On the other hand, if we fix a finite $t \in \bbN$, we have that $ \left\| \zeta^{\Omega_t} \right\|_k \to 1$ as $k \to \infty$. By Definition \ref{sec:def_subW}(ii) of the sub-Weibull rv, we have that
$$ \eta(t) \geq \frac{\|\zeta^{\Omega_t}\|_k}{\sqrt{k}} =  \frac{(1-p +\zeta^k p )^{\frac{t}{k}}}{\sqrt{k}}, \qquad \forall k \geq 1.$$ Therefore, $\eta(t)$ is a decreasing function of $t$, and it takes values in the set $(0, 1)$; also, $\eta(t) \rightarrow 0^+$ for $k~\to~\infty$. Thus, for any given $t$ and any finite $k$, we can choose $\eta(t)~=~\max_k \left\{ \frac{\left( 1 - p + \zeta^k p \right)^{\frac{t}{k}} }{\sqrt{k}} \right\}~\in~ (0, 1)$.  

With this characterization in place, we now derive a bound for $d_{t} := \|\bx_{t} - \bx_{*, t}\|$. Notice that $\bx_{*, t}$ satisfies the fixed-point equation $\bx_{*,t}~:=~\mathrm{proj}_{\calX_{t}} \left[ \bx_{*,t} - \alpha \nabla f_t(\bx_{*,t})\right]$. Then, 
\small \begin{align} \nonumber
    d_{t+1} &\stackrel{\mathrm{(a)}}{=} \| v_t \, \mathrm{proj}_{\calX_{t+1}} \left[\bx_t - \alpha \left( \nabla f_{t+1}(\bx_t) + \be_t \right) \right] - \bx_{*, t+1} \\ \nonumber
    &\quad + (1 - v_t) \, \mathrm{proj}_{\calX_{t+1}} \left[ \bx_t \right] \| \\ \nonumber
    &\stackrel{\mathrm{(b)}}{=} \| \mathrm{proj}_{\calX_{t+1}} \left[ \bx_t \right] + v_t \, \mathrm{proj}_{\calX_{t+1}} \left[\bx_t - \alpha \left( \nabla f_{t+1}(\bx_t) + \be_t \right) \right] \\ \nonumber
    &\quad - v_t \mathrm{proj}_{\calX_{t+1}} \left[ \bx_t \right] - \bx_{*, t+1} + v_t \bx_{*, t+1} - v_t \bx_{*, t+1} \| \\ \nonumber
    &\stackrel{\mathrm{(c)}}{\leq} \| \mathrm{proj}_{\calX_{t+1}} \left[ \bx_t \right] - \bx_{*, t+1} + v_t \bx_{*, t+1} - v_t \mathrm{proj}_{\calX_{t+1}} \left[ \bx_t \right] \| \\ \nonumber
    &\quad + v_t  \| \left[\bx_t - \alpha \left( \nabla f_{t+1}(\bx_t) + \be_t \right) \right] -  \bx_{*, t+1} \|\\ \nonumber
    &\stackrel{\mathrm{(d)}}{\leq}(1-v_t)\| \bx_t - \bx_{*, t+1} \|  + v_t \zeta_{t+1} \| \bx_t - \bx_{*, t+1} \| + v_t \alpha \|  \be_t \| \\ 
    &\stackrel{\mathrm{(e)}}{\leq} \zeta_{t+1}^{v_t} d_t + \zeta_{t+1}^{v_t}  \phi_{t} + v_t \alpha \|  \be_t \|.
    \label{eq:case2_P1}
\end{align} \normalsize
where (a) holds by \eqref{eq:update_case2}; (b) by adding and subtracting $\bx_{*,t+1}$; (c) by reorganizing terms, using the triangle inequality, and the non-expansiveness property of the projection operator into the second term; (d) by using the triangle inequality on the second term, using the fact that $\calI - \alpha \nabla f_{t+1}$ is $\zeta_{t+1}$-Lipschitz, and the non-expansiveness property of the projection; and, (e) holds by adding and subtracting $\bx_{*,t}$, using the triangle inequality, by the definition of $\phi_t$, and $\zeta_{t+1}^{v_t} := 1-v_t + v_t \zeta_{t+1}$.

To show \emph{1(i)}, take the expectation of \eqref{eq:case2_P1} to obtain
\begin{align} \nonumber
    \bbE \left[ d_{t+1} \right] &\leq \bbE [ \zeta_{t+1}^{v_t} d_t  + \zeta_{t+1}^{v_t} \phi_t + v_t \alpha \|\be_t\| ]\\ \nonumber
    &\stackrel{\mathrm{(a)}}{=} \bbE [ \zeta_{t+1}^{v_t} d_t ] + \bbE [ \zeta_{t+1}^{v_t} \phi_t ] + \bbE [ v_t \alpha \|\be_t \| ] \\ \nonumber
    &\stackrel{\mathrm{(b)}}{=} \bbE[\zeta_{t+1}^{v_t} ] \bbE [d_t] + \bbE[\zeta_{t+1}^{v_t} ] \phi_t + \alpha \bbE [ v_t ] \bbE [ \|\be_t\| ] \\
    &\stackrel{\mathrm{(c)}}{\leq} \rho_{t+1}(\alpha) \bbE [ d_t ] + \rho_{t+1}(\alpha) \phi_t + \alpha p E_t,
    \label{eq:case2_P2}
\end{align}
where (a) holds by the linearity of the expected value; (b) by the independence of the rvs $v_t$ and $\|\be_t\|$; and, (c) holds by the expected value of the Bernoulli rv $v_t$, where we also used $\rho_t(\alpha) \geq \zeta_t$. Recursively applying \eqref{eq:case2_P2}, we have,
\begin{align} \nonumber
    \bbE \left[ d_t \right] &\leq \beta_t \bbE [ d_0 ] + \sum_{i=1}^{t} \kappa_i \phi_{i-1} + \alpha p  \sum_{i=1}^{t}\omega_i E_i.
    \label{eq:case2_P3}
\end{align}

To show \emph{1(ii)}, let $\zeta := \sup_{1 \leq i \leq t} \{\zeta_i\} \in (0,1)$; obviously, $\zeta_t \leq \zeta \, \forall t$. Then
\begin{equation}
    d_{t+1} \leq \zeta^{v_t} d_t + \zeta^{v_t}  \phi_t + v_t \alpha \|\be_t \|,
    \label{eq:case2_P4}
\end{equation}
almost surely, and iterating \eqref{eq:case2_P4} we have
\begin{align} \nonumber
   & \hspace{-.4cm} d_t \leq \prod_{i=1}^{t} \zeta^{v_i} d_0 + \sum_{i=1}^{t} \prod_{k=i}^{t} \zeta^{v_k} \phi_{i-1} + \alpha \sum_{i=1}^{t} \prod_{k=i+1}^{t} \zeta^{v_k} v_i \|  \be_i \| \\
    & \hspace{-.4cm} =  \zeta^{\Omega_t} d_0 + \sum_{i=1}^{t} \zeta^{\left( \sum_{k=i}^{t} v_k \right)} \phi_{i-1} + \alpha \sum_{i=1}^{t} \zeta^{\left( \sum_{k=i+1}^{t} v_k \right)} v_i \|  \be_i \|. \hspace{-.2cm}
    \label{eq:case2_P5}
\end{align}

Define the sub-sequence $\{i_j\}_{j=1}^{\Omega_t}$ with $v_{i_j} = 1$ for $j~=~1, \dots, \Omega_t$; i.e., $\{j\}$ are the indices of the iterations where an update is performed. Then, 
\begin{equation*}
    \sum_{i=1}^{t} \zeta^{\left( \sum_{k=i+1}^{t} v_k \right)} v_i \| \be_i \| = \sum_{j=1}^{\Omega_t} \zeta^{\left( \sum_{k=i_j+1}^{t} v_k \right)} \|  \be_{i_j} \|.
\end{equation*}

By definition of $\{i_j\}_{j=1}^{\Omega_t}$, between the times $i_j + 1$ and $t$ the total number of updates is $\Omega_t - j$; then, we rewrite \eqref{eq:case2_P5} as follows: 
\begin{align} \nonumber
    d_{t} &\leq \zeta^{\Omega_t} d_0 + \alpha \sum_{j=1}^{\Omega_t} \zeta^{\Omega_t - j} \|  \be_{i_j} \| + \sum_{i=1}^{t} \zeta^{\left( \sum_{k=i}^{t} v_k \right)} \phi_{i-1} \\
    &\leq \zeta^{\Omega_t} d_0 + \alpha \sum_{j=1}^{t} \zeta^{j} \|  \be_{t - i_j} \| + \sum_{i=1}^{t} \zeta^{\left( \sum_{k=i}^{t} v_k \right)} \phi_{i-1},
    \label{eq:case2_P6}
\end{align}
where $t - \Omega_t$ terms are added to the sum to remove the dependence on $\Omega_t$. Recall that $\zeta^{\Omega_t} \sim \mathrm{subW}(1/2, \eta(t))$, where $t \mapsto \eta(t)$ is monotonically decreasing. By Assumptions~\ref{as:error}-\ref{as:error2} and Lemma \ref{lm:error}, we have that $\| \be_t \| \sim \mathrm{subW} (\theta_{\me},\nu_{\me,t}) \, \forall t$,  
where $\theta_{\me} = \max \{\theta_{\varepsilon}, \theta_{\xi} \}$ and $\nu_{\me, t} =  (2^{\theta_\varepsilon} \sqrt{M} \nu_{\varepsilon, {t}}) + (2^{\theta_\xi} \sqrt{M} \nu_{\xi, {t}})$. Then, using Proposition~\ref{sec:closure}, we get:  
$$\zeta^{\Omega_t} d_0 + \alpha \sum_{j=1}^{t} \zeta^{j} \|  \be_{t - i_j} \| \sim \mathrm{subW}(\theta', \nu'), \vspace{-0.1cm} $$  where $ \theta' = \max\{1/2, \max \{\theta_{\varepsilon}, \theta_{\xi} \} \}$, 

    $\nu' = \eta(t) d_0 + \alpha \frac{1 - \zeta^t}{1 - \zeta} \sup_{0 \leq i \leq t} \{ (2^{\theta_\varepsilon} \sqrt{M} \nu_{\varepsilon,{i}}) + (2^{\theta_\xi} \sqrt{M} \nu_{\xi,{i}}) \}$,

and where we used the closure of the sub-Weibull rv with respect to sum and product (with a scalar), and then the inclusion property.

To characterize the last term of \eqref{eq:case2_P6} we have that
\begin{align*} 
    \sum_{i=1}^{t} \zeta^{\left( \sum_{k=i}^{t} v_k \right)} \phi_{i-1} = \sum_{j=1}^{\Omega_t} \zeta^j \sum_{i = i_j}^{i_{j+1}} \phi_{i} \leq \sum_{j=1}^{t} \zeta^j \sum_{i = i_j}^{i_{j+1}} \phi_i.
\end{align*} 
Note that $\sum_{i = i_j}^{i_{j+1}} \phi_{i}$ is the sum of a given number of the deterministic path lengths $\phi_t$; then we get
\begin{align} \label{eq:phi}
    \hspace{-0.25cm} \sum_{i = i_j}^{i_{j+1}} \phi_{i}    &{\leq} \sup_{0 \leq i \leq t} \{\phi_{i}\}  \sum_{i = i_j}^{i_{j+1}} 1 =  \sup_{0 \leq i \leq t} \{ \phi_{i} \} (i_{j+1}-i_j).
\end{align}
The geometric rv $i_{j+1} - i_j $ can be characterized as a sub-Weibull rv. Since the exponential distribution is the continuous analogue of geometric distribution, and by logarithmic properties, we have $(i_{j+1}- i_j)~\leq~Z \sim \mathrm{subW}(1, 1/p)$; by Proposition \ref{sec:closure}, we have that \eqref{eq:phi} $\sim \mathrm{subW}(1,\sup_{0 \leq i \leq t} \{\phi_i\} \, p^{-1})$. Therefore, \eqref{eq:case2_P6} follows a sub-Weibull distribution with parameters $\max\{1, \theta_{\me}\}$ and $\eta(t) d_0 + \frac{1 - \zeta^t}{1 - \zeta} \sup_{0 \leq i \leq t} \left\{ \alpha \nu_{\me,i} + p^{-1} \phi_{i-1} \right\}$. Using the high probability bound~\eqref{eq:highProbabilitybound} in Proposition~\ref{sec:h.p} the result follows.

\end{document}